\documentclass{amsart}

\usepackage{verbatim}
\usepackage{amssymb}
\usepackage{amscd}
\usepackage{amsmath}
\usepackage{setspace}

\newtheorem{theorem}{Theorem}[section]

\newtheorem{corollary}[theorem]{Corollary}
\newtheorem{lemma}[theorem]{Lemma}

\numberwithin{equation}{section}

\theoremstyle{remark}

\newtheorem{remark}[theorem]{Remark}

\renewcommand{\Re}{\mathrm{Re}}

\newcommand{\Z}{\mathbb Z}

\newcommand{\A}{\mathbb A}

\newcommand{\Sp}{\mathrm{Sp}}
\newcommand{\GSp}{\mathrm{GSp}}

\title[Prime density results for Hecke eigenvalues]{Prime density results for Hecke eigenvalues of a Siegel cusp form}
\author{Abhishek Saha}

 \address{ETH Z\"urich -- D-MATH\\
  R\"amistrasse 101\\
  8092 Z\"urich\\
  Switzerland}

\email{abhishek.saha@math.ethz.ch}
\begin{document}
\begin{abstract}Let $F\in S_k(\Sp(2g, \Z))$ be a cuspidal Siegel eigenform of genus $g$ with normalized Hecke eigenvalues $\mu_F(n)$. Suppose that the associated automorphic representation $\pi_F$ is locally tempered everywhere. For each $c>0$ we consider the set of primes $p$ for which $|\mu_F(p)| \ge c$ and we provide an explicit upper bound on the density of this set. In the case $g=2$, we also provide an explicit upper bound on the density of the set of primes $p$ for which $\mu_F(p) \ge c$.  \end{abstract}
\bibliographystyle{plain}
\maketitle
\section{Introduction}
For a classical Hecke eigenform $f(z)$, the recently proved (see~\cite{satotate}) Sato-Tate conjecture asserts that the normalized Hecke eigenvalues $\mu_f(p)$ are equidistributed in $[-2,2]$ with respect to the Sato-Tate measure. For a cuspidal Siegel eigenform $F(Z)$ of genus $g > 1$, no such equidistribution result is currently available. The purpose of this short paper is to prove a couple of results about the distribution of its Hecke eigenvalues using analyticity properties of $L$-functions associated to $F$ that are known at present.

Let $F(Z) \in S_k(\Sp(2g, \Z))$ be a cuspidal Siegel eigenform of genus $g$, weight $k$ and Hecke eigenvalues $\lambda_F(n)$, $n>0$. Define the normalized Hecke eigenvalues $\mu_F(n)= \lambda_F(n)/n^{\frac{kg}{2}-\frac{g(g+1)}{4}}$. Further, assume that $F$ satisfies the generalized Ramanujan conjecture, i.e., the associated local representations are all tempered. Then we have  $\mu_F(p) \in [-2^{g}, 2^{g}]$ for all primes $p$.

We prove the following Theorem about the distribution of the quantity $|\mu_F(p)|$.

\begin{theorem}\label{maintheorem1} Suppose that $ g \not \equiv 0 \pmod {4}$. Let $F\in S_k(\Sp(2g, \Z))$ be a cuspidal Siegel eigenform such that the associated automorphic representation $\pi_F$ is locally tempered everywhere. For each $c >0$, let $S^{F,0}_c$ denote the set of primes $p$ for which $|\mu_F(p)| \ge c$. Then we have $$\overline{\delta}_{\mathrm{Dir}}(S^{F,0}_c) \le (2-\tfrac{1}{g})c^{-\frac{2}{g}}$$ where $\overline{\delta}_{\mathrm{Dir}}$ denotes the upper Dirichlet density.
\end{theorem}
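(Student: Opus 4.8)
The plan is to compare the indicator $\mathbf 1[\,|\mu_F(p)|\ge c\,]$ with a small fractional power of $|\mu_F(p)|$, to linearise that power — via the arithmetic--geometric mean inequality — in terms of the $p$-th coefficient of the standard $L$-function $L(s,\pi_F,\mathrm{std})$, and then to read off the density from the behaviour of that $L$-function at $s=1$.

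\emph{Step 1: a pointwise bound.} Let $\{1,\, e^{\pm i\theta_{1,p}},\ldots, e^{\pm i\theta_{g,p}}\}$ be the Satake parameters of $\pi_{F,p}$ relative to the standard ($2g+1$-dimensional) representation of $\SO(2g+1,\C)$; these all lie on the unit circle since $\pi_F$ is tempered at $p$. Because $\mu_F(p)$ is the spinor Hecke eigenvalue — the trace of the spin representation of $\mathrm{Spin}(2g+1)$ at the Satake class — one has the standard identity
\[
\mu_F(p)^2 \;=\; 2^{g}\prod_{i=1}^{g}\bigl(1+\cos\theta_{i,p}\bigr).
\]
Set $a_F(p):=1+2\sum_{i=1}^g\cos\theta_{i,p}$, the $p$-th coefficient of $L(s,\pi_F,\mathrm{std})$. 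Applying the AM--GM inequality to the nonnegative numbers $1+\cos\theta_{i,p}$ and taking $g$-th roots gives
\[
|\mu_F(p)|^{2/g}\;\le\;2\Bigl(1+\tfrac1g\sum_{i=1}^g\cos\theta_{i,p}\Bigr)\;=\;\Bigl(2-\tfrac1g\Bigr)+\tfrac1g\,a_F(p)
\]
for every prime $p$. Since trivially $\mathbf 1[\,|\mu_F(p)|\ge c\,]\le c^{-2/g}|\mu_F(p)|^{2/g}$, the problem is reduced to controlling $\sum_p a_F(p)\,p^{-s}$ as $s\to 1^+$.

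\emph{Step 2: the analytic input and the role of the hypothesis on $g$.} Temperedness everywhere makes the Euler product $L(s,\pi_F,\mathrm{std})=\prod_p\prod_j(1-\gamma_{j,p}p^{-s})^{-1}$ absolutely convergent for $\Re(s)>1$, and, all $|\gamma_{j,p}|$ being $1$, one has $\sum_p a_F(p)p^{-s}=\log L(s,\pi_F,\mathrm{std})+O(1)$ on a real neighbourhood of $1$. The hypothesis $g\not\equiv 0\pmod 4$ enters precisely to guarantee that \emph{$L(s,\pi_F,\mathrm{std})$ has no pole at $s=1$}: for these $g$ there is no rank-$2g$ even unimodular lattice, which rules out the theta lift whose standard $L$-function is residual at the edge, and, together with temperedness (which excludes CAP representations), this is known to force holomorphy there — a fact I would cite rather than reprove. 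Granting it, $\log L(s,\pi_F,\mathrm{std})$ stays bounded above as $s\to 1^+$ (a zero at $s=1$ would only help), so that $\limsup_{s\to 1^+}\bigl(\log\tfrac1{s-1}\bigr)^{-1}\sum_p a_F(p)p^{-s}\le 0$.

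\emph{Step 3: conclusion.} Using $\sum_p p^{-s}=\log\tfrac1{s-1}+O(1)$ together with the bounds of Step 1,
\[
\sum_{p\in S^{F,0}_c}p^{-s}\;\le\;c^{-2/g}\Bigl[\bigl(2-\tfrac1g\bigr)\sum_p p^{-s}+\tfrac1g\sum_p a_F(p)p^{-s}\Bigr];
\]
dividing by $\log\tfrac1{s-1}$ and letting $s\to 1^+$ yields $\overline{\delta}_{\mathrm{Dir}}(S^{F,0}_c)\le(2-\tfrac1g)c^{-2/g}$. I expect the only genuine obstacle to be Step 2, namely establishing and correctly citing the absence of a pole of $L(s,\pi_F,\mathrm{std})$ at $s=1$ in the stated range of $g$; Steps 1 and 3 are elementary. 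It is worth noting that the exponent $2/g$ is the natural one here: it is exactly the power that turns the degree-$g$ product $\mu_F(p)^2$ into an expression linear in the standard coefficients after AM--GM, so that only the first moment of $a_F(p)$ — equivalently, only the non-existence of an edge pole, not the full Rankin--Selberg machinery — is required.
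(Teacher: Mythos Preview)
Your argument is correct and is essentially the paper's own proof: the same AM--GM inequality on $|\mu_F(p)|^2=\prod_i(2+a_{i,p}+a_{i,p}^{-1})$ relates $|\mu_F(p)|^{2/g}$ linearly to the $p$-th standard coefficient, and Mizumoto's theorem that $L_{\mathrm{std}}(\pi_F,s)$ has no pole at $s=1$ for $g\not\equiv 0\pmod 4$ supplies the analytic input. The only cosmetic difference is that the paper packages Step~3 as an abstract density lemma (given $f(p)\ge -C$ everywhere and $f(p)\ge D$ on $S$, with $\sum_p f(p)p^{-s}$ bounded above near $1$, one gets $\overline{\delta}_{\mathrm{Dir}}(S)\le C/(C+D)$), while you bound the indicator directly; the two are unwinding the same inequality.
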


See the beginning of Section~\ref{s:proofs} for a remark on the assumption $g \not \equiv 0 \pmod {4}$.

\begin{remark}In the special case $g=2$, the generalized Ramanujan conjecture has been proved by Weissauer~\cite{weissram}. So in this case, we get the unconditional result that $\overline{\delta}_{\mathrm{Dir}}(S^{F,0}_c) \le \frac{3}{2c}$ for all $F$ not in the Maa{\ss} space.\end{remark}

We prove a second result in the case $g=2$, this time about the distribution of the signed quantity $\mu_F(p)$.

\begin{theorem}\label{maintheorem2}Let $F\in S_k(\Sp(4, \Z))$ be a cuspidal eigenform that is not in the Maa{\ss} space. For each $c >0$, let $S^{F,1}_c$ denote the set of primes $p$ for which $\mu_F(p) \ge c$. Then we have $$\overline{\delta}_{\mathrm{Dir}}(S^{F,1}_c) \le \frac{4}{c+4}.$$
\end{theorem}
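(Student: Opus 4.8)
The strategy is to exploit positivity of Dirichlet series built from $L$-functions attached to $F$ whose analytic behaviour (holomorphy, nonvanishing, or at worst a simple pole) is known at $s=1$. For $g=2$, the spin $L$-function $L(s,\pi_F,\mathrm{spin})$ and the degree-$5$ standard $L$-function $L(s,\pi_F,\mathrm{std})$, together with their symmetric/exterior square constructions, are known (by work of Andrianov, Weissauer, Shimura, and others) to extend to functions with controlled poles; since $F$ is not in the Maa\ss\ space, $\pi_F$ is not CAP and the spin $L$-function is entire, while the standard $L$-function has a simple pole at $s=1$. Concretely, write $\mu_F(p) = \alpha_p + \alpha_p^{-1} + \beta_p + \beta_p^{-1}$ in terms of the Satake parameters, with all $|\alpha_p| = |\beta_p| = 1$ by temperedness (Weissauer). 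The plan is to choose a polynomial $P$ in $\mu_F(p)$ with nonnegative coefficients against the relevant $L$-functions such that $P(x) \ge 0$ on $[-4,4]$, $P$ vanishes to high order at $x = -4$ or is otherwise small on the bulk, and the logarithmic derivative $\sum_p P(\mu_F(p)) p^{-s}$ has at most a simple pole of known residue at $s=1$. The bound $\frac{4}{c+4}$ strongly suggests the test function $\frac{x+4}{8}$ (which is $\ge 0$ on $[-4,4]$, equals $1$ at $x=4$, and equals $0$ at $x=-4$): on the set $S^{F,1}_c$ we have $\frac{\mu_F(p)+4}{8} \ge \frac{c+4}{8}$, so
\[
  \frac{c+4}{8}\,\overline{\delta}_{\mathrm{Dir}}(S^{F,1}_c) \le \limsup_{s\to 1^+} (s-1)\sum_p \frac{\mu_F(p)+4}{8}\, p^{-s},
\]
and it remains to show the right-hand side is at most $\frac{1}{2}$, i.e.\ that $\sum_p \mu_F(p) p^{-s}$ contributes residue $\le 0$ and $\sum_p 4\cdot p^{-s}$ contributes the expected residue $1$ of $\log\zeta(s)$.

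The first step, then, is to express $\sum_p \mu_F(p) p^{-s}$ (up to a holomorphic-at-$s=1$ remainder coming from prime powers and from the difference between $\log L$ and its Euler-product leading term) as $\log L(s,\pi_F,\mathrm{spin})$ plus lower-order pieces; since the spin $L$-function is holomorphic and nonvanishing at $s=1$ (nonvanishing on $\Re s = 1$ being the standard ingredient, available here because $L(s,\pi_F,\mathrm{spin})$ is a product of $\mathrm{GL}_2/\mathrm{GL}_4$-type $L$-functions with known analytic properties, or can be deduced from nonvanishing of $L(s,\pi_F\times\pi_F)$ type objects), $\limsup_{s\to 1^+}(s-1)\log L(s,\pi_F,\mathrm{spin}) = 0$. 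The second step is to handle the non-prime, non-leading contributions: the coefficient of $p^{-s}$ in $\log L(s,\pi_F,\mathrm{spin})$ is exactly $\mu_F(p)$ only up to the local factors at ramified primes (here, none, since the level is $1$) and the Satake normalization, so one must check the bookkeeping, but all such corrections are absolutely convergent for $\Re s > 1/2$ by temperedness and hence holomorphic at $s=1$. The third step combines this with $\sum_p p^{-s} = \log\zeta(s) + (\text{holomorphic})$, whose residue at $s=1$ is $1$, to conclude that the right side of the displayed inequality is $\tfrac18(0 + 4\cdot 1) = \tfrac12$, giving $\overline{\delta}_{\mathrm{Dir}}(S^{F,1}_c) \le \frac{4}{c+4}$.

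The main obstacle is establishing that $\sum_p \mu_F(p) p^{-s}$ has \emph{nonpositive} residue at $s=1$ — equivalently, that $L(s,\pi_F,\mathrm{spin})$ does not have a pole at $s=1$ and is nonvanishing there (a pole would force a positive residue and a zero a negative one; we want no pole, and nonvanishing is what we can prove and what suffices, since it yields residue exactly $0$). Holomorphy at $s=1$ is precisely where the hypothesis ``not in the Maa\ss\ space'' enters: Maa\ss\ (Saito–Kurokawa) forms are CAP and their spin $L$-functions factor through $\zeta(s-1/2)\zeta(s+1/2)L(s,f)$, which has a pole; excluding them (and knowing, via Weissauer's classification of the cuspidal spectrum of $\mathrm{GSp}_4$, that these are the only cuspidal eigenforms of level $1$ whose spin $L$-function is non-entire) gives holomorphy. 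Nonvanishing on the line $\Re s = 1$ then follows by the standard Hadamard–de la Vallée Poussin argument applied to the product $\zeta(s)L(s,\pi_F,\mathrm{spin})^{?}\cdots$, or by invoking known results on nonvanishing of automorphic $L$-functions on the edge of the critical strip. Once holomorphy and nonvanishing are in hand, the rest is the routine Tauberian/Dirichlet-density manipulation sketched above; I would also double-check the elementary inequality $\frac{x+4}{8} \ge 0$ on $[-4,4]$ and that no better-than-linear test function is being used, since the clean form of the bound is exactly what pins down the choice $P(x) = (x+4)/8$.
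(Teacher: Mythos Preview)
Your overall strategy is exactly the paper's: use the test function $x+4$ (equivalently, the paper's Lemma~\ref{lemmalfunc} with $C=4$, $D=c$, $E=4$), feed in the bounds $-4\le \mu_F(p)\le 4$ from temperedness, and use Andrianov's theorem that the spin $L$-function has no pole at $s=1$ when $F$ is not a Saito--Kurokawa lift.

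There is, however, a genuine slip in your Tauberian step. The sum $\sum_p p^{-s}$ does \emph{not} have a simple pole at $s=1$; it behaves like $\log\zeta(s)\sim -\log(s-1)$, so $(s-1)\sum_p p^{-s}\to 0$, not $1$. As written, your displayed inequality
\[
\frac{c+4}{8}\,\overline{\delta}_{\mathrm{Dir}}(S^{F,1}_c) \le \limsup_{s\to 1^+} (s-1)\sum_p \frac{\mu_F(p)+4}{8}\, p^{-s}
\]
has right-hand side equal to $0$, which is false. The correct normalisation is to divide by $-\log(s-1)$ (or by $\sum_p p^{-s}$), exactly as in the definition of $\overline{\delta}_{\mathrm{Dir}}$; then the ``$4$'' term contributes $4\cdot 1$ and the ``$\mu_F(p)$'' term contributes $\le 0$ because $\log L_{\text{spin}}(\pi_F,s)$ stays bounded above as $s\to 1^+$. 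The paper packages this as Lemma~\ref{lemmalfunc}, which also handles the subtlety that $\limsup$ does not commute with sums.

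One further simplification: you do not need nonvanishing of $L_{\text{spin}}(\pi_F,s)$ at $s=1$. All that is required is $\limsup_{s\to 1^+}\log L_{\text{spin}}(\pi_F,s)<\infty$, i.e.\ the absence of a pole; a zero would send $\log L_{\text{spin}}\to -\infty$, which only helps the upper bound. So the Hadamard--de la Vall\'ee Poussin discussion can be dropped entirely.
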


Since the upper Dirichlet density is always at least as large as the lower natural density $\underline{\delta}_{\mathrm{Nat}}$, Theorems~\ref{maintheorem1} and~\ref{maintheorem2} remain valid with $\overline{\delta}_{\mathrm{Dir}}$ replaced by $\underline{\delta}_{\mathrm{Nat}}$ (see Corollary~\ref{corollary}).

Note that Theorem~\ref{maintheorem1} is non-trivial only in the range $(2-\tfrac{1}{g})^{\frac{g}{2}} \le c \le 2^g$.
Also note that in the range $c\ge\frac{12}{5}$, Theorem~\ref{maintheorem1} automatically implies a stronger version of Theorem~\ref{maintheorem2}! So Theorem~\ref{maintheorem2} is interesting only in the range $0 < c < \frac{12}{5}$.

The proof of Theorem~\ref{maintheorem1} uses the fact that the standard (degree $2g+1$) $L$-function for $F$ has no pole at 1 while the proof of Theorem~\ref{maintheorem2} uses the fact that the spinor (degree $2^g$) $L$-function for $F$ has no pole at 1 (in the case $g=2$). One would expect much stronger results if similar facts can be proved for the higher symmetric powers of these $L$-functions.

Theorems~\ref{maintheorem1} and~\ref{maintheorem2} are essentially assertions about the smallness of the set of primes for which the size of the Hecke eigenvalue is large. In particular, they do not say anything about the proportion (or even infinitude) of primes for which $\mu_F(p)$ is negative and this seems to be a challenging open question. In this context, we note that if one considers the Hecke eigenvalues  $\mu_F(p^j)$ for all positive integers $j$, then it is known in the special case $g=2$ that $\mu_F(p^j)$ takes both positive and negative values infinitely often, see~\cite{kohsign}, \cite{pitschsign}.

\section{Local considerations}\label{slocal}
Let $F(Z) \in S_k(\Sp(2g, \Z))$ be a cuspidal Siegel Hecke eigenform of genus $g$, weight $k$ and eigenvalues $\lambda_F(n)$, $n>0$. Recall that $\mu_F(n)= \lambda_F(n)/n^{\frac{kg}{2}-\frac{g(g+1)}{4}}$ denotes the normalized Hecke eigenvalues.

Let $\pi_F$ be one of the irreducible pieces of the representation of $\GSp(2g,\A)$ generated by the adelization of $F$. For this and other details we refer the reader to~\cite{asgsch}. Then $\pi_F$ is an automorphic representation and is isomorphic to a restricted tensor product $\pi_F =\otimes_v\pi_{F,v}$.

There are two $L$-functions commonly attached to $\pi_F$. The first of these is the \emph{spinor} $L$-function denoted by $L_{\text{spin}}(\pi_{F}, s)$. This has degree $2^g$ and is defined via an Euler product

\begin{equation} \label{spinequationglobal}L_{\text{spin}}(\pi_{F}, s)=\prod_{p<\infty} L_{\text{spin}}(\pi_{F,p}, s)\end{equation}
where the local factors $L_{\text{spin}}(\pi_{F,p}, s)$ are given by

\begin{equation} \label{spinequation} L_{\text{spin}}(\pi_{F,p}, s) = \prod_{k=0}^g \prod_{1 \le i_1 <\ldots <i_k \le g}(1-a_{0,p}a_{i_1,p}\ldots a_{i_k,p} p^{-s})^{-1}
\end{equation}

Here the complex numbers $a_{0,p}$, $a_{1,p}$, $\ldots$, $a_{g,p}$ are the Satake parameters of the local unramified representation $\pi_{F,p}$ associated to $F$ at the place $p$.  Because $F$ has no character, we have the relation
\begin{equation}\label{trivcentral}a_{0,p}^2a_{1,p}\ldots a_{g,p}=1.\end{equation}

 Note that all our Satake parameters and $L$-functions are Langlands normalized, with the functional equation for the $L$-function taking $s$ to $1-s$ .

One also has the \emph{standard} $L$-function $L_{\text{std}}(\pi_{F}, s)$ defined by the Euler product

\begin{equation} \label{stdequationglobal}L_{\text{std}}(\pi_{F}, s)=\prod_{p<\infty} L_{\text{std}}(\pi_{F,p}, s)\end{equation}

where the local factors are given by
\begin{equation} \label{stdequation} L_{\text{std}}(\pi_{F,p}, s) = \bigg((1-p^{-s})\prod_{i=1}^g (1 - a_{i,p}p^{-s})(1 - a_{i,p}^{-1}p^{-s})\bigg)^{-1}
\end{equation}

We say that the associated local representation $\pi_{F,p}$ is \emph{tempered} if $|a_{i,p}|=1$ for $0 \le i \le g$. We say that $\pi_F$ is \emph{locally tempered everywhere} if $\pi_{F,p}$ is tempered for all primes $p$. In that case, the series in~\eqref{spinequationglobal} and~\eqref{stdequationglobal} evidently converge absolutely for $\Re(s)>1.$

By Weissauer's proof~\cite{weissram} of the generalized  Ramanujan conjecture for holomorphic Siegel cusp forms of genus 2, we know that if $F \in S_k(\Sp(4, \Z))$ is not a Saito-Kurokawa lift then $\pi_F$ is locally tempered everywhere.

The relation between the normalized Hecke eigenvalues $\mu_F(p)$ and the Satake parameters is given~\cite{andrianov} as follows: \begin{equation} \label{andrequation} \mu_F(p) = \sum_{k=0}^g\sum_{1 \le i_1 <\ldots <i_k \le g}a_{0,p}a_{i_1,p}\ldots a_{i_k,p}\end{equation}

Using~\eqref{trivcentral} we see that $\mu_F(p)$ is real.

We will need the following two lemmas in the proofs of our main Theorems.

\begin{lemma}\label{oeq}Suppose $\pi_{F,p}$ is tempered. For $\Re(s)>1$ and $r \ge 0$, define the coefficients $m_F(p^r)$ and $\rho_F(p^r)$ via $L_{\text{spin}}(\pi_{F,p}, s) = \sum_{r \ge 0}\frac{m_F(p^r)}{p^{rs}}$ and $L_{\text{std}}(\pi_{F,p}, s) = \sum_{r \ge 0}\frac{\rho_F(p^r)}{p^{rs}}.$
Then $m_F(p^r)$ and $\rho_F(p^r)$ are real numbers and satisfy the inequalities $$|m_F(p^r)| \le \binom{r+2^g -1}{2^g -1}, \qquad |\rho_F(p^r) - \rho_F(p^{r-1})| \le \binom{r+2g-1}{2g-1}.$$
\end{lemma}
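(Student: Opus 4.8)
The plan is to expand each local $L$-factor as a product of geometric series in $X := p^{-s}$, to recognize the resulting coefficients as complete homogeneous symmetric functions of the Satake parameters (all of modulus $1$ by temperedness), and then to read off the stated bounds by counting monomials; reality will come from conjugation-stability of the relevant multiset of parameters.

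For the spinor factor, I would enumerate the $2^g$ \emph{spinor parameters} $\beta_S := a_{0,p}\prod_{i\in S}a_{i,p}$, one for each $S\subseteq\{1,\dots,g\}$, each of modulus $1$, so that $L_{\text{spin}}(\pi_{F,p},s) = \prod_S(1-\beta_S X)^{-1} = \sum_{r\ge0} h_r X^r$, where $h_r$ denotes the complete homogeneous symmetric polynomial of degree $r$ in the $2^g$ variables $\beta_S$. Since $h_r$ is a sum of $\binom{r+2^g-1}{2^g-1}$ monomials, each a product of $r$ of the $\beta_S$ and hence of modulus $1$, the triangle inequality gives $|m_F(p^r)| = |h_r| \le \binom{r+2^g-1}{2^g-1}$. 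For the reality of $m_F(p^r)$ the key point is that complex conjugation permutes the multiset $\{\beta_S\}$: temperedness gives $\overline{a_{i,p}} = a_{i,p}^{-1}$, while the relation~\eqref{trivcentral} gives $a_{0,p}^{-1} = a_{0,p}\prod_{i=1}^g a_{i,p}$, whence $\overline{\beta_S} = a_{0,p}^{-1}\prod_{i\in S}a_{i,p}^{-1} = a_{0,p}\prod_{i\notin S}a_{i,p} = \beta_{S'}$ with $S' := \{1,\dots,g\}\setminus S$. Since $S\mapsto S'$ is an involution, $\prod_S(1-\beta_S X)$ has real coefficients, and so all $m_F(p^r)$ are real.

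For the standard factor the point is to bound the successive differences $\rho_F(p^r)-\rho_F(p^{r-1})$ rather than $\rho_F(p^r)$ itself, exploiting that the factor $(1-p^{-s})^{-1}$ in~\eqref{stdequation} contributes exactly the geometric series $\sum_r X^r$. I would write
$$(1-X)\sum_{r\ge0}\rho_F(p^r)X^r = \prod_{i=1}^g(1-a_{i,p}X)^{-1}(1-a_{i,p}^{-1}X)^{-1} = \sum_{r\ge0}\widetilde h_r X^r,$$
where $\widetilde h_r$ is the complete homogeneous symmetric polynomial of degree $r$ in the $2g$ variables $a_{1,p},a_{1,p}^{-1},\dots,a_{g,p},a_{g,p}^{-1}$. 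Comparing coefficients (with the convention $\rho_F(p^{-1}):=0$) yields $\rho_F(p^r)-\rho_F(p^{r-1}) = \widetilde h_r$, which is a sum of $\binom{r+2g-1}{2g-1}$ monomials of modulus $1$, and the triangle inequality gives the claimed bound. Reality of $\rho_F(p^r)$ then follows because the multiset $\{a_{i,p},a_{i,p}^{-1}\}_{i=1}^g$ is stable under conjugation, so each $\widetilde h_r$ — and hence each partial sum $\rho_F(p^r)=\sum_{0\le j\le r}\widetilde h_j$ — is real.

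The argument is essentially bookkeeping, so I do not expect a serious obstacle. The two points needing a little care are: (i) invoking the central-character relation~\eqref{trivcentral} to see that conjugation carries $\beta_S$ to $\beta_{S'}$, which is precisely what forces $m_F(p^r)\in\R$ (without it, the reality of the spinor coefficients is not clear); and (ii) telescoping the factor $(1-X)^{-1}$ off the standard local factor, which is exactly what sharpens the naive estimate $|\rho_F(p^r)|\le\binom{r+2g}{2g}$ into the stated bound $\binom{r+2g-1}{2g-1}$ on the consecutive differences — the form in which this estimate is actually used in the proof of Theorem~\ref{maintheorem1}.
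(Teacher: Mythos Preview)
Your argument is correct and is exactly the approach the paper takes, just spelled out in full detail: the paper's proof merely asserts that reality follows from~\eqref{trivcentral} and temperedness, and that $m_F(p^r)$ (respectively $\rho_F(p^r)-\rho_F(p^{r-1})$) is a sum of $\binom{r+2^g-1}{2^g-1}$ (respectively $\binom{r+2g-1}{2g-1}$) terms of absolute value $1$, which is precisely your identification of these quantities as complete homogeneous symmetric polynomials in the relevant parameters.
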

\begin{proof} The fact that $m_F(p^r)$ and $\rho_F(p^r)$ are real numbers follows directly from~\eqref{trivcentral} and the assumption that the quantities $a_{i,p}$ lie on the unit circle. Furthermore, it is evident from the definition of the local $L$-factors (see~\eqref{spinequation},~\eqref{stdequation}) that $m_F(p^r)$ is a sum of $\binom{r+2^g -1}{2^g -1}$ terms with absolute value 1 and $\rho_F(p^r)-\rho_F(p^{r-1})$ is a sum of $\binom{r+2g-1}{2g-1}$ terms with absolute value 1: this supplies the desired inequalities.
\end{proof}

\begin{remark}\label{mum}A comparison of~\eqref{spinequation} and~\eqref{andrequation} shows that $\mu_F(p) = m_F(p)$.
\end{remark}

\begin{lemma}\label{lemmaineq}Suppose $\pi_{F,p}$ is tempered and $|\mu_F(p)| \ge c$ for some $c > 0$. Then $$1+\sum_{i=1}^g (a_{i,p} +  a_{i,p}^{-1}) \ge gc^{\frac{2}{g}} -2g +1.$$
\end{lemma}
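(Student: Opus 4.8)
The plan is to exploit a factorization of $\mu_F(p)$ that is implicit in~\eqref{andrequation}. Expanding the product $\prod_{i=1}^g (1 + a_{i,p})$ term by term produces precisely $\sum_{k=0}^g \sum_{1 \le i_1 < \ldots < i_k \le g} a_{i_1,p} \cdots a_{i_k,p}$, so comparing with~\eqref{andrequation} gives the identity
\begin{equation}\label{factoreq}
\mu_F(p) = a_{0,p} \prod_{i=1}^g (1 + a_{i,p}).
\end{equation}
The first step is simply to record~\eqref{factoreq}.

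Next I would bring in the temperedness hypothesis. Since $|a_{0,p}| = 1$, taking absolute values in~\eqref{factoreq} yields $|\mu_F(p)| = \prod_{i=1}^g |1 + a_{i,p}|$. Moreover, because each $a_{i,p}$ lies on the unit circle we have $a_{i,p}^{-1} = \overline{a_{i,p}}$, hence
$$|1 + a_{i,p}|^2 = (1 + a_{i,p})(1 + a_{i,p}^{-1}) = 2 + a_{i,p} + a_{i,p}^{-1},$$
which is a nonnegative real number. Squaring the expression for $|\mu_F(p)|$ and using the hypothesis $|\mu_F(p)| \ge c > 0$ then gives
$$c^2 \le |\mu_F(p)|^2 = \prod_{i=1}^g \big(2 + a_{i,p} + a_{i,p}^{-1}\big).$$

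The final step is to apply the arithmetic--geometric mean inequality to the $g$ nonnegative reals $2 + a_{i,p} + a_{i,p}^{-1}$: since their product is at least $c^2$, their arithmetic mean is at least $(c^2)^{1/g} = c^{2/g}$, so
$$\sum_{i=1}^g \big(2 + a_{i,p} + a_{i,p}^{-1}\big) \ge g\,c^{\frac{2}{g}}.$$
Rearranging yields $\sum_{i=1}^g (a_{i,p} + a_{i,p}^{-1}) \ge g c^{2/g} - 2g$, and adding $1$ to both sides gives the claimed inequality. There is no genuine obstacle here; the one place the temperedness hypothesis is essential is in guaranteeing that each factor $2 + a_{i,p} + a_{i,p}^{-1}$ is a nonnegative real, which is exactly what is needed to legitimately invoke AM--GM (and, as already noted after~\eqref{andrequation}, to know $\mu_F(p)$ is real at all).
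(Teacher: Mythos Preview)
Your proof is correct and follows essentially the same route as the paper's: factor $\mu_F(p) = a_{0,p}\prod_{i=1}^g(1+a_{i,p})$, use temperedness to write $|1+a_{i,p}|^2 = 2 + a_{i,p} + a_{i,p}^{-1}$, and then apply the AM--GM inequality to the nonnegative quantities $2 + a_{i,p} + a_{i,p}^{-1}$ to extract the desired lower bound on their sum. The paper phrases the AM--GM step as $\prod_i (2+U_i) \le (U/g + 2)^g$ rather than as a lower bound on the arithmetic mean, but this is the same inequality read in the other direction.
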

\begin{proof} Put $U_i = (a_{i,p} +  a_{i,p}^{-1})$ and $U = \sum_{i=1}^g U_i$.

We have \begin{align*}|\mu_F(p)|^2 &= |\prod_{k=0}^g\sum_{1 \le i_1 <\ldots <i_k \le g}a_{i_1,p}\ldots a_{i_k,p}|^2\\ &= \prod_{i=1}^g |1+a_{i,p}|^2\\ &=\prod_{i=1}^g (1+a_{i,p})(1+a_{i,p}^{-1}) \\ &= \prod_{i=1}^g (2 + U_i) \\ & \le (\frac{U}{g} + 2)^g \end{align*} by the AM-GM inequality.

So we have  $$(\frac{U}{g} + 2)^g \ge c^2$$

which is equivalent to $$U \ge  gc^{\frac{2}{g}} -2g$$ as desired.

\end{proof}

\begin{remark} The inequality in the above Lemma is clearly the best possible for $c \le 2^g$ and arbitrary complex numbers $a_{i,p}$ on the unit circle satisfying~\eqref{trivcentral}.
\end{remark}

\section{Two prime density lemmas}

In this section we supply a couple of lemmas on the density of a set of primes. The results are probably classical, but we were unable to find a good reference and so include a proof for completeness.

First, some defintions.
Let $\mathcal{P} = \{2, 3, \ldots \}$ denote the set of primes.

For a subset $S$ of $\mathcal{P}$, the upper Dirichlet density of $S$ is denoted by $\overline{\delta}_{\mathrm{Dir}}(S)$ and is defined by
$$\overline{\delta}_{\mathrm{Dir}}(S) := \limsup_{s \rightarrow 1^+} \frac{\sum_{p\in S} \frac{1}{p^s}}{-\log(s-1)}.$$ This is equivalent to $$\overline{\delta}_{\mathrm{Dir}}(S) := \limsup_{s \rightarrow 1^+} \frac{\sum_{p\in S} \frac{1}{p^s}}{\sum_{p\in \mathcal{P}} \frac{1}{p^s}}.$$

The lower Dirichlet density of $S$ is denoted by $\underline{\delta}_{\mathrm{Dir}}(S)$ and is defined by
$$\underline{\delta}_{\mathrm{Dir}}(S) := \liminf_{s \rightarrow 1^+} \frac{\sum_{p\in S} \frac{1}{p^s}}{-\log(s-1)} = \liminf_{s \rightarrow 1^+} \frac{\sum_{p\in S} \frac{1}{p^s}}{\sum_{p\in \mathcal{P}} \frac{1}{p^s}}.$$

Similarly, we define the upper and lower natural densities by
$$\overline{\delta}_{\mathrm{Nat}}(S) := \limsup_{x \rightarrow \infty}\frac{ \#\{ p: p \le x, p\in S\}}{\#\{ p: p \le x, p\in \mathcal{P}\}} $$ and $$\underline{\delta}_{\mathrm{Nat}}(S) := \liminf_{x \rightarrow \infty}\frac{ \#\{ p: p \le x, p\in S\}}{\#\{ p: p \le x, p\in \mathcal{P}\}}. $$

It is easy to see that $$\overline{\delta}_{\mathrm{Dir}}(S) = 1- \underline{\delta}_{\mathrm{Dir}}(\mathcal{P}-S),$$ $$\overline{\delta}_{\mathrm{Nat}}(S) = 1- \underline{\delta}_{\mathrm{Nat}}(\mathcal{P}-S).$$

\begin{lemma}\label{densityrel}Let $S \subset \mathcal{P}$. Then $$\underline{\delta}_{\mathrm{Nat}}(S)\le \underline{\delta}_{\mathrm{Dir}}(S) \le \overline{\delta}_{\mathrm{Dir}}(S) \le \overline{\delta}_{\mathrm{Nat}}(S)$$
\end{lemma}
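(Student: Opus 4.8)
The plan is to prove the four-term chain by establishing the two outer inequalities (natural density controls Dirichlet density on each side) and then noting that the middle inequality $\underline{\delta}_{\mathrm{Dir}}(S) \le \overline{\delta}_{\mathrm{Dir}}(S)$ is trivial since $\liminf \le \limsup$ for any function. By the duality relations $\overline{\delta}_{\mathrm{Nat}}(S) = 1 - \underline{\delta}_{\mathrm{Nat}}(\mathcal{P}\setminus S)$ and $\overline{\delta}_{\mathrm{Dir}}(S) = 1 - \underline{\delta}_{\mathrm{Dir}}(\mathcal{P}\setminus S)$ recorded just above the lemma, the inequality $\overline{\delta}_{\mathrm{Dir}}(S) \le \overline{\delta}_{\mathrm{Nat}}(S)$ follows from $\underline{\delta}_{\mathrm{Nat}}(\mathcal{P}\setminus S) \le \underline{\delta}_{\mathrm{Dir}}(\mathcal{P}\setminus S)$ applied to the complement; so it suffices to prove the single inequality $\underline{\delta}_{\mathrm{Nat}}(S) \le \underline{\delta}_{\mathrm{Dir}}(S)$ for an arbitrary $S \subset \mathcal{P}$.

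For that inequality, set $\beta = \underline{\delta}_{\mathrm{Nat}}(S)$ and fix any $\epsilon > 0$. By definition of the liminf there is an $x_0$ such that $\#\{p \le x : p \in S\} \ge (\beta - \epsilon)\,\#\{p \le x : p \in \mathcal{P}\}$ for all $x \ge x_0$. I would then compare the Dirichlet series $\sum_{p \in S} p^{-s}$ with $(\beta - \epsilon)\sum_{p \in \mathcal{P}} p^{-s}$ as $s \to 1^+$ by writing each as a Stieltjes integral against the counting functions and performing Abel summation (partial summation): $\sum_{p \in S} p^{-s} = \int_{2^-}^\infty x^{-s}\, d\pi_S(x)$ where $\pi_S(x) = \#\{p \le x : p \in S\}$, and similarly with $\pi_{\mathcal P}$ for the full sum. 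After integrating by parts, the dominant contribution for $s$ near $1$ comes from large $x$, where the lower bound on $\pi_S$ in terms of $\pi_{\mathcal P}$ kicks in; the finite discrepancy over $p < x_0$ contributes an $O(1)$ term which is negligible after dividing by $\sum_{p \in \mathcal{P}} p^{-s} \to \infty$. This yields $\liminf_{s\to 1^+} \bigl(\sum_{p\in S} p^{-s}\bigr)/\bigl(\sum_{p\in\mathcal P} p^{-s}\bigr) \ge \beta - \epsilon$, and letting $\epsilon \to 0$ gives $\underline{\delta}_{\mathrm{Dir}}(S) \ge \beta = \underline{\delta}_{\mathrm{Nat}}(S)$.

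The main technical point — and the step I expect to require the most care — is making the partial summation estimate uniform enough to isolate the $\liminf$: one needs that the tail $\int_{x_0}^\infty x^{-s}\,d\pi_{\mathcal P}(x)$ genuinely dominates, i.e. that $\sum_{p \le x_0} p^{-s}$ stays bounded (obvious) while $\sum_{p \in \mathcal P} p^{-s} \sim -\log(s-1) \to \infty$ (this is the standard fact, equivalent to Mertens/Dirichlet, underlying the two equivalent formulations of Dirichlet density given above). Concretely I would split $\sum_{p \in S} p^{-s} = \sum_{p \in S, p < x_0} p^{-s} + \sum_{p \in S, p \ge x_0} p^{-s}$, bound the second sum below via $\pi_S(x) \ge (\beta - \epsilon)\pi_{\mathcal P}(x)$ and Abel summation against $x^{-s}$, compare the resulting integral to the corresponding integral for $\mathcal{P}$, and absorb the boundary and low-prime terms into a quantity that is $o\bigl(\sum_{p\in\mathcal P}p^{-s}\bigr)$ as $s \to 1^+$. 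Dividing through and taking $\liminf$ then $\epsilon \to 0$ completes the argument. No new ingredients beyond elementary real analysis and the divergence rate of $\sum_p p^{-s}$ are needed.
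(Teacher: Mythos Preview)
Your proposal is correct and follows essentially the same approach as the paper: both arguments use Abel/partial summation to convert the Dirichlet sum into an integral against the counting function, invoke the $\epsilon$-definition of natural density for large $x$, absorb the contribution from primes below $x_0$ into an $O(1)$ term negligible against $\sum_p p^{-s}\to\infty$, and then obtain the other outer inequality by passing to the complement $\mathcal{P}\setminus S$. The only cosmetic difference is that the paper proves $\overline{\delta}_{\mathrm{Dir}}(S)\le\overline{\delta}_{\mathrm{Nat}}(S)$ directly and deduces the lower inequality by complementation, whereas you do it the other way around.
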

\begin{proof}
Let $S(x)$ be the number of primes $p\in S$ such that $p\le x$ and $\pi(x)$ the number of primes less than or equal to $x$. Let $dS(t)$ denote the counting measure for $S$. By partial summation, we have for $s>1$,

\begin{align*}\sum_{p \in S, p\le x} \frac{1}{p^s} &= \int_1^x t^{-s}dS(t)\\&= \frac{S(x)}{x^s} + s\int_1^x\frac{S(t)}{t^{s+1}}dt \\&= s\int_1^x\frac{S(t)}{t^{s+1}}dt + O(1) \end{align*} where the implied constant can be taken to be equal to 1 (and is thus uniform in $x$ and $s$).

By hypothesis, for any $\epsilon >0$ there exists $x_0 = x_0(\epsilon) \ge 1$ such that for all $x > x_0$ we have $$S(x) \le (\overline{\delta}_{\mathrm{Nat}}(S) + \epsilon)\pi(x).$$ Since $s\int_1^{x_0}\frac{S(t)}{t^{s+1}}dt =O(1)$ where the implied constant is uniform for $s$ in $(1,2)$, we have $$\sum_{p \in S} \frac{1}{p^s} \le (\overline{\delta}_{\mathrm{Nat}}(S) + \epsilon)\int_1^\infty\frac{\pi(t)}{t^{s+1}}dt +O(1).$$ By reversing the partial summation argument from above, we have $$\int_1^\infty\frac{\pi(t)}{t^{s+1}}dt = \sum_{p \in \mathcal{P}} \frac{1}{p^s} +O(1).$$ This gives $$ \frac{\sum_{p \in S} \frac{1}{p^s}}{\sum_{p \in \mathcal{P}} \frac{1}{p^s}} = (\overline{\delta}_{\mathrm{Nat}}(S) + \epsilon) +o(1)$$ as $s \rightarrow 1^+$. Thus $\overline{\delta}_{\mathrm{Dir}}(S) \leq \overline{\delta}_{\mathrm{Nat}}(S) + \epsilon$. Letting $\epsilon \rightarrow 0$ we conclude that $$\overline{\delta}_{\mathrm{Dir}}(S)\le\overline{\delta}_{\mathrm{Nat}}(S).$$ Replacing $S$ by $\mathcal{P}-S$ in the above immediately gives $$\underline{\delta}_{\mathrm{Nat}}(S)\le \underline{\delta}_{\mathrm{Dir}}(S) .$$

\end{proof}

\begin{lemma}\label{lemmalfunc}Let $S \subset \mathcal{P}$. Let $C>0$, $E \ge D>0$ be constants and let $f$ be a real-valued function on $\mathcal{P}$ such that the following conditions hold: \begin{enumerate}
\item $-C \le f(p) \le E$ for all $p \in \mathcal{P},$
\item $f(p) \ge D$ for all $p \in S$.
\end{enumerate}
Define the complex analytic function $L(s) := \sum_p \frac{f(p)}{p^s}$ on the region $\Re(s) > 1$, and assume that we have $\limsup_{s \rightarrow 1^+} L(s) < \infty$ as $s$ approaches 1 from the right on the real line. Then,
$$\overline{\delta}_{\mathrm{Dir}}(S)  \le \frac{C}{C+D} $$
\end{lemma}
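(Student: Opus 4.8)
The plan is to play off the nonnegativity of $f(p)+C$ against the analytic input $\limsup_{s\to 1^+}L(s)<\infty$. Set $g(p):=f(p)+C$. Hypothesis (1) gives $0\le g(p)\le C+E$ for every prime $p$, and hypothesis (2) gives $g(p)\ge C+D$ for every $p\in S$. Since $|f(p)|\le\max(C,E)$ is bounded, the Dirichlet series $\sum_p g(p)p^{-s}$, $L(s)=\sum_p f(p)p^{-s}$ and $\sum_p p^{-s}$ all converge absolutely on $\Re(s)>1$, and there we have the identity $\sum_p g(p)p^{-s}=L(s)+C\sum_p p^{-s}$.

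Next I would discard the primes outside $S$ from $\sum_p g(p)p^{-s}$ (legitimate since every term is nonnegative) and bound the remaining sum below termwise using $g(p)\ge C+D$ on $S$. For real $s>1$ this yields
\[
(C+D)\sum_{p\in S}\frac{1}{p^s}\ \le\ \sum_{p\in S}\frac{g(p)}{p^s}\ \le\ \sum_{p}\frac{g(p)}{p^s}\ =\ L(s)+C\sum_{p}\frac{1}{p^s}.
\]
Dividing through by the positive quantity $(C+D)\sum_p p^{-s}$ gives
\[
\frac{\sum_{p\in S}p^{-s}}{\sum_{p}p^{-s}}\ \le\ \frac{L(s)}{(C+D)\sum_{p}p^{-s}}\ +\ \frac{C}{C+D}.
\]

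Finally I would let $s\to 1^+$. Since $\sum_p p^{-s}=-\log(s-1)+O(1)\to+\infty$ while $\limsup_{s\to 1^+}L(s)<\infty$, the first term on the right has $\limsup$ at most $0$; taking $\limsup$ on both sides and invoking the equivalent formulation of $\overline{\delta}_{\mathrm{Dir}}(S)$ in terms of $\sum_p p^{-s}$ (recorded in the definitions preceding Lemma~\ref{densityrel}) delivers $\overline{\delta}_{\mathrm{Dir}}(S)\le\frac{C}{C+D}$. I do not anticipate a genuine obstacle here: the argument is entirely elementary. The only points deserving care are the absolute convergence of the series on $\Re(s)>1$ (which is exactly where the upper bound $E$ in hypothesis (1) enters, since only the lower bounds on $g$ are used afterwards) and the standard fact $\sum_p p^{-s}\to\infty$ as $s\to 1^+$, which also underlies the equivalence of the two forms of the Dirichlet density used in the statement.
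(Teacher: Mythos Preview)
Your proof is correct and is essentially the paper's argument: both derive the inequality $(C+D)\sum_{p\in S}p^{-s}\le L(s)+C\sum_{p}p^{-s}$ from the hypotheses and then pass to the limit $s\to 1^+$. Your packaging via the nonnegative shift $g(p)=f(p)+C$ and a direct $\limsup$ is a bit cleaner than the paper's $\epsilon$--$\delta$ formulation, but the mathematical content is the same.
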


\begin{proof} We have $$1-\overline{\delta}_{\mathrm{Dir}}(S) = \liminf_{s \rightarrow 1^+} \frac{\sum_{p\in \mathcal{P}-S} \frac{1}{p^s}}{-\log(s-1)}. $$

It follows that \begin{equation}\label{eq2}\overline{\delta}_{\mathrm{Dir}}(S) - 1 = \limsup_{s \rightarrow 1^+} \frac{-\sum_{p\in \mathcal{P}-S} \frac{1}{p^s}}{-\log(s-1)}. \end{equation}

Choose $\epsilon>0, \delta >0$. Then we can find $1< s'<1+\delta$ such that

$$\frac{\sum_{p\in S} \frac{1}{p^{s'}}}{-\log(s'-1)} >  \overline{\delta}_{\mathrm{Dir}}(S) - \epsilon$$
 $$\frac{-\sum_{p\in \mathcal{P}-S} \frac{1}{p^{s'}}}{-\log(s'-1)} > \overline{\delta}_{\mathrm{Dir}}(S) - 1 - \epsilon.$$

So we get \begin{align*} \frac{L(s')}{-\log(s'-1)} &\ge \sum_{p\in S} \frac{\frac{D}{p^{s'}}}{-\log(s'-1)} - \sum_{p\in \mathcal{P}-S} \frac{\frac{C}{p^{s'}}}{-\log(s'-1)}\\ &> D(\overline{\delta}_{\mathrm{Dir}}(S) - \epsilon) + C(\overline{\delta}_{\mathrm{Dir}}(S) - 1 - \epsilon)\end{align*}

Now let $\delta \rightarrow 0$. Then because $\limsup_{s \rightarrow 1^+} L(s) < \infty$ we have $\limsup_{s' \rightarrow 1^+}\frac{L(s')}{-\log(s'-1)} \le 0.$ Thus $$ D(\overline{\delta}_{\mathrm{Dir}}(S) - \epsilon) + C(\overline{\delta}_{\mathrm{Dir}}(S) - 1 - \epsilon) < 0$$ for all $\epsilon>0$, which implies that

$$\overline{\delta}_{\mathrm{Dir}}(S) \le \frac{C}{C+D}.$$

\end{proof}

\section{Proofs of the main results}\label{s:proofs}
Let $F$ be as in Section~\ref{slocal}. Suppose $\pi_F$ is locally tempered everywhere. Then $L_{\text{spin}}(\pi_{F}, s)$ and $L_{\text{std}}(\pi_{F}, s)$ converge absolutely for $\Re(s)>1$ and are non-vanishing in that region. Also, we have the following facts.

\begin{theorem}[Mizumoto~\cite{miz}] Suppose $g \not \equiv 0 \pmod{4}$. Then $L_{\text{std}}(\pi_{F}, s)$ can be continued to a meromorphic function on the entire complex plane that has no pole at $s=1$.
\end{theorem}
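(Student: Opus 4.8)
The theorem is due to Mizumoto~\cite{miz}; here I indicate the shape of the argument. The plan is to start from the classical Rankin--Selberg integral representation of the standard $L$-function (Andrianov--Kalinin and B\"ocherer, or, adelically, the doubling method of Piatetski-Shapiro--Rallis), which expresses $L_{\text{std}}(\pi_F,s)$, up to explicit archimedean factors and an elementary Dirichlet series, as a Petersson inner product of $F$ against the pullback of a Siegel--Eisenstein series $E(\cdot,s)$ --- either a non-holomorphic Eisenstein series of genus $g$ paired with $F\otimes\overline F$, or a Siegel--Eisenstein series on $\Sp(4g)$ restricted to $\Sp(2g)\times\Sp(2g)$. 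Meromorphic continuation of $L_{\text{std}}(\pi_F,s)$ to $\C$ is then inherited from that of $E(\cdot,s)$, and every pole of $L_{\text{std}}(\pi_F,s)$ occurs at a pole of $E(\cdot,s)$.

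The next step is to locate the possible poles of $E(\cdot,s)$ --- these are known explicitly from the work of Kalinin, B\"ocherer, Weissauer and Shimura and depend on $g$ and the weight $k$ only through congruence and size conditions --- and to match the Eisenstein parameter with the Langlands-normalized variable $s$ (for which the functional equation reads $s\mapsto 1-s$), so as to decide whether $E(\cdot,s)$ is regular at the point corresponding to $s=1$. At a point where $E(\cdot,s)$ does have a pole, I would control the residue via the Siegel--Weil formula: the residual Eisenstein series is a constant multiple of a theta series, so the residue of $L_{\text{std}}(\pi_F,s)$ at $s=1$ is, up to a nonzero scalar, a theta period of $F$ against a lattice of level $1$, and hence vanishes unless $F$ lies in the image of a theta lift from the corresponding orthogonal group.

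The congruence hypothesis $g\not\equiv 0\pmod 4$ is exactly what makes one of these two mechanisms succeed. At level $1$ the quadratic space $V$ that could produce a pole at $s=1$ would have to come from an even unimodular lattice, and the arithmetic of such lattices --- their rank is divisible by $8$, together with the constraint linking $\dim V$ to $g$ --- is compatible with an admissible $V$ only when $4\mid g$; equivalently, $E(\cdot,s)$ is regular at the relevant point once $g\not\equiv 0\pmod 4$. Hence there is no pole at $s=1$ in that range, which is the assertion. (When $4\mid g$ the residue need not vanish and a pole can genuinely occur, which is why that case is excluded.)

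The hard part here is not a new analytic ingredient but the bookkeeping: reconciling the classical and adelic normalizations, tracking the shift between the Eisenstein variable and $s$, keeping the archimedean $\Gamma$-factors straight, and checking that $g\not\equiv 0\pmod 4$ is precisely the condition that rules out the pole. Since all of this is carried out in~\cite{miz}, the course I would take --- and the one adopted here --- is simply to invoke that reference.
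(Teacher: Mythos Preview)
The paper does not give its own proof of this statement: it is simply quoted as a result of Mizumoto and cited. Your proposal ultimately does the same thing --- invoke \cite{miz} --- but supplements the citation with a sketch of the underlying argument (Rankin--Selberg/doubling integral, poles governed by the Siegel--Eisenstein series, residues interpreted via Siegel--Weil as theta periods, and the arithmetic of even unimodular lattices explaining the $4\mid g$ obstruction). That sketch is accurate in outline, and in fact the paper's own Remark immediately following the theorem --- that the congruence condition can be weakened to ``$F$ is not in the subspace generated by certain theta series'' --- corroborates exactly the Siegel--Weil picture you describe. So there is no discrepancy: both you and the paper defer to Mizumoto, and your added exposition is consistent with what is known.
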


\begin{remark} Actually the condition $g \not \equiv 0 \pmod{4}$ can be replaced by the weaker condition that $F$ is not in the subspace generated by certain theta series.
\end{remark}

\begin{theorem}[Andrianov~\cite{andrianov}] Suppose $g=2$ and $F$ is not a Saito-Kurokawa lift. Then $L_{\text{spin}}(\pi_{F}, s)$ can be continued to a meromorphic function on the entire complex plane that has no pole at $s=1$.
\end{theorem}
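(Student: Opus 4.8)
This is Andrianov's theorem, and the plan is to reconstruct his argument through a Rankin--Selberg integral representation for the genus-$2$ spinor $L$-function. First I would realize $L_{\text{spin}}(\pi_F,s)$, up to an explicit product of archimedean $\Gamma$-factors, a shift of $\zeta(2s)$, and an elliptic Hecke $L$-value $L(s,h)$ attached to a suitable auxiliary eigenform $h\in S_\ell(\SL_2(\Z))$, as a Petersson pairing $\langle F,\mathcal E(\,\cdot\,,\overline s;h)\rangle$ of $F$ against a non-holomorphic Eisenstein-type series $\mathcal E(Z,s;h)$ on $\Sp(4,\Z)$ --- this is Andrianov's identity, obtained by unfolding and by bookkeeping with the Fourier and Fourier--Jacobi expansions of $F$. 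Choosing $h$ so that the auxiliary factors are holomorphic and nonvanishing near $s=1$ transfers the analytic behaviour of $\mathcal E$ onto $L_{\text{spin}}(\pi_F,s)$ there.

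Next I would invoke the classical meromorphic continuation and functional equation of the Eisenstein series $\mathcal E(Z,s;h)$, which give a meromorphic continuation of $L_{\text{spin}}(\pi_F,s)$ to all of $\C$ together with the functional equation $s\mapsto 1-s$. The possible poles of $L_{\text{spin}}(\pi_F,s)$ are then governed by the finitely many (simple) poles of $\mathcal E(Z,s;h)$; a residue computation at the pole responsible for the point $s=1$ shows that its contribution is a nonzero explicit multiple of the Petersson inner product of $F$ with its orthogonal projection onto the Maass space (the Spezialschar). In particular $L_{\text{spin}}(\pi_F,s)$ is holomorphic at $s=1$ as soon as that projection vanishes.

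To conclude, I would invoke the Saito--Kurokawa correspondence: in genus $2$ and level one the Maass space is precisely the span of the Saito--Kurokawa lifts, so the projection above --- hence the residue --- vanishes unless $F$ is itself a Saito--Kurokawa lift. By hypothesis it therefore vanishes, and $L_{\text{spin}}(\pi_F,s)$ has no pole at $s=1$.

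The hard part is the residue computation of the second step: one must push through Andrianov's Fourier--Jacobi bookkeeping carefully enough to recognize the edge residue as the Spezialschar projection --- equivalently, one must establish that $L_{\text{spin}}(\pi_F,s)$ acquires a pole at $s=1$ precisely when $\pi_F$ is CAP (necessarily of Saito--Kurokawa type, $F$ being holomorphic of level one). A cleaner but substantially heavier alternative would be to observe that $F$, being non-Saito--Kurokawa of level one and scalar weight --- hence neither CAP nor a Yoshida lift, the latter because $S_2(\SL_2(\Z))=0$ --- is of general type, so that $\pi_F$ transfers to a cuspidal automorphic representation $\Pi$ of $\GL_4(\A)$ and $L_{\text{spin}}(\pi_F,s)=L(s,\Pi)$ is entire; but Andrianov's argument is older and self-contained, so I would present that.
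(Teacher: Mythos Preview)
The paper does not give a proof of this theorem at all: it is quoted as a known result of Andrianov~\cite{andrianov} and simply invoked as input to the proof of Theorem~\ref{maintheorem2}. There is therefore no ``paper's own proof'' to compare your proposal against.

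That said, your outline is a fair summary of the classical approach --- Andrianov's integral representation expressing the spinor $L$-function via a Rankin--Selberg pairing against an Eisenstein-type series, meromorphic continuation from that of the Eisenstein series, and identification of the residue at $s=1$ with the projection onto the Maass Spezialschar (so that the pole occurs exactly for Saito--Kurokawa lifts). Your caveat about the residue step being the delicate part is well placed: in the literature this identification was completed in work subsequent to Andrianov's original continuation, and your alternative via the transfer to $\GL_4$ is indeed a valid (much later) route. For the purposes of this paper, however, none of this needs to be reproduced; citing Andrianov suffices.
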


It is now a simple matter to conclude the proofs of our main Theorems.

\begin{proof}[Proof of Theorem~\ref{maintheorem1}] Consider the function $\log L_{\text{std}}(\pi_{F}, s)$ on the region $\Re(s)>1$. Using the temperedness of $\pi_F$, the Euler product~\eqref{stdequationglobal}, and Lemma~\ref{oeq}, we have

\begin{align*}\log L_{\text{std}}(\pi_{F}, s) &= \sum_{p \in \mathcal{P}}\left(\frac{\rho_F(p)}{p^s} + O(p^{-2})\right)\\&= \sum_{p \in \mathcal{P}}\frac{\rho_F(p)}{p^s} + O(1) \end{align*}
where $\rho_F(p) = 1+\sum_{i=1}^g (a_{i,p} +  a_{i,p}^{-1})$.

Put $R(s) = \sum_{p \in \mathcal{P}}\frac{\rho_F(p)}{p^s},$ so that we have \begin{equation}\label{rineq}\log L_{\text{std}}(\pi_{F}, s) =  R(s)  + O(1). \end{equation} Since $L_{\text{std}}(\pi_{F}, s)$ has no pole at 1, it follows from~\eqref{rineq} that $\limsup_{s \rightarrow 1^+} R(s) < \infty.$ Moreover, by Lemma~\ref{lemmaineq}, $\rho_F(p) \ge gc^{\frac{2}{g}} -2g + 1$ for all $p \in S^{F,0}_c$. From Lemma~\ref{oeq} we have that $-2g+1 \le \rho_F(p) \le 2g+1$ for all $p$. Thus by an application of Lemma~\ref{lemmalfunc} with $S = S_c^{F,0}$, $C = 2g-1$, $D = g c^{2/g} - 2 g + 1$ and $E = 2g+1$, we have  $$\overline{\delta}_{\mathrm{Dir}}(S^{F,0}_c) \le (2-\tfrac{1}{g})c^{-\frac{2}{g}}.$$
\end{proof}

\begin{proof}[Proof of Theorem~\ref{maintheorem2}] Consider the function $\log L_{\text{spin}}(\pi_{F}, s)$ on the region $\Re(s)>1$.
 Using Remark~\ref{mum}, we have by an argument similar to that above, $$\log L_{\text{spin}}(\pi_{F}, s) = \sum_{p \in \mathcal{P}}\frac{\mu_F(p)}{p^s} + O(1)$$  Put $T(s) = \sum_{p \in \mathcal{P}}\frac{\mu_F(p)}{p^s}.$ Then, because $L_{\text{spin}}(\pi_{F}, s)$ has no pole at 1, it follows that $\limsup_{s \rightarrow 1^+} T(s) < \infty$. From Lemma~\ref{oeq} we have that $-4\le \mu_F(p) \le 4$ for all $p$. So, by an application of Lemma~\ref{lemmalfunc} with $S = S_c^{F,1}$, $C = 4$, $D = c$ and $E = 4$, we have  $$\overline{\delta}_{\mathrm{Dir}}(S^{F,1}_c) \le \frac{4}{c+4}.$$

\end{proof}

\begin{corollary}\label{corollary}Theorems~\ref{maintheorem1} and~\ref{maintheorem2} remain valid with $\overline{\delta}_{\mathrm{Dir}}$ replaced by $\underline{\delta}_{\mathrm{Nat}}$.
\end{corollary}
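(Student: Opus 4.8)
\begin{proof}[Proof of Corollary~\ref{corollary}]
The plan is simply to invoke the chain of inequalities established in Lemma~\ref{densityrel}. That lemma gives, for any subset $S \subset \mathcal{P}$,
$$\underline{\delta}_{\mathrm{Nat}}(S)\le \underline{\delta}_{\mathrm{Dir}}(S) \le \overline{\delta}_{\mathrm{Dir}}(S) \le \overline{\delta}_{\mathrm{Nat}}(S);$$
in particular $\underline{\delta}_{\mathrm{Nat}}(S) \le \overline{\delta}_{\mathrm{Dir}}(S)$. Applying this with $S = S^{F,0}_c$ and combining with Theorem~\ref{maintheorem1} yields $\underline{\delta}_{\mathrm{Nat}}(S^{F,0}_c) \le \overline{\delta}_{\mathrm{Dir}}(S^{F,0}_c) \le (2-\tfrac{1}{g})c^{-\frac{2}{g}}$, and applying it with $S = S^{F,1}_c$ and combining with Theorem~\ref{maintheorem2} yields $\underline{\delta}_{\mathrm{Nat}}(S^{F,1}_c) \le \overline{\delta}_{\mathrm{Dir}}(S^{F,1}_c) \le \frac{4}{c+4}$. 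This is exactly the assertion of the corollary.
\end{proof}

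The only substantive content is Lemma~\ref{densityrel}, which has already been proved above via partial summation; there is no further obstacle. One could, if desired, remark that the same argument also upgrades the conclusions to the lower Dirichlet density $\underline{\delta}_{\mathrm{Dir}}$, but this is subsumed by the displayed chain and requires no additional work.
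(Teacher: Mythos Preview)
Your proof is correct and is essentially identical to the paper's own proof, which simply reads ``This follows from Lemma~\ref{densityrel}.'' You have merely unpacked that one-line appeal by writing out the chain of inequalities and specializing to $S^{F,0}_c$ and $S^{F,1}_c$.
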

\begin{proof}This follows from Lemma~\ref{densityrel}.
\end{proof}

\section{Acknowledgements}I would like to thank Jacob Tsimerman for useful discussions that motivated this work, Paul Nelson for his help with the proof of Lemma~\ref{densityrel} above and for his careful reading of the manuscript, and finally the anonymous referee for some excellent suggestions which significantly improved this paper.

\bibliography{lfunction}

\def\cprime{$'$} \newcommand{\noopsort}[1]{}
\begin{thebibliography}{1}

\bibitem{andrianov}
A.~N. Andrianov.
\newblock Euler products that correspond to {S}iegel's modular forms of genus
  {$2$}.
\newblock {\em Uspehi Mat. Nauk}, 29(3 (177)):43--110, 1974.

\bibitem{asgsch}
Mahdi Asgari and Ralf Schmidt.
\newblock Siegel modular forms and representations.
\newblock {\em Manuscripta Math.}, 104(2):173--200, 2001.

\bibitem{satotate}
Tom Barnet-Lamb, David Geraghty, Michael Harris, and Richard Taylor.
\newblock A family of {C}alabi-{Y}au varieties and potential automorphy {II}.
\newblock {\em To appear, P.R.I.M.S.}, 2010.

\bibitem{kohsign}
Winfried Kohnen.
\newblock Sign changes of {H}ecke eigenvalues of {S}iegel cusp forms of genus
  two.
\newblock {\em Proc. Amer. Math. Soc.}, 135(4):997--999 (electronic), 2007.

\bibitem{miz}
Shin-ichiro Mizumoto.
\newblock Poles and residues of standard {$L$}-functions attached to {S}iegel
  modular forms.
\newblock {\em Math. Ann.}, 289(4):589--612, 1991.

\bibitem{pitschsign}
Ameya Pitale and Ralf Schmidt.
\newblock Sign changes of {H}ecke eigenvalues of {S}iegel cusp forms of degree
  2.
\newblock {\em Proc. Amer. Math. Soc.}, 136(11):3831--3838, 2008.

\bibitem{weissram}
Rainer Weissauer.
\newblock {\em Endoscopy for {${\rm GSp}(4)$} and the cohomology of {S}iegel
  modular threefolds}, volume 1968 of {\em Lecture Notes in Mathematics}.
\newblock Springer-Verlag, Berlin, 2009.

\end{thebibliography}

\end{document}